\numberwithin{equation}{section}
\DeclareMathOperator{\Ran}{Ran}
\DeclareMathOperator{\Ker}{Ker}
\DeclareMathOperator{\Dom}{Dom}
\DeclareMathOperator{\spec}{\sigma}
\DeclarePairedDelimiter{\abs}{\lvert}{\rvert}
\DeclarePairedDelimiter{\norm}{\lVert}{\rVert}
\newcommand{\ess}{\mathrm{ess}}
\newcommand{\diag}{{\mathrm{diag}}}
\newcommand{\off}{{\mathrm{off}}}
\newcommand{\NN}{\mathbb{N}}
\newcommand{\RR}{\mathbb{R}}
\newcommand{\EE}{\mathsf{E}}
\newcommand{\cD}{{\mathcal D}}
\newcommand{\cH}{{\mathcal H}}
\newcommand{\cK}{{\mathcal K}}
\newcommand{\cM}{{\mathcal M}}
\newcommand{\cN}{{\mathcal N}}
\newcommand{\cU}{{\mathcal U}}
\newcommand{\cV}{{\mathcal V}}
\newcommand{\cW}{{\mathcal W}}
\newcommand{\fM}{{\mathfrak M}}
\theoremstyle{plain}
\newtheorem{theorem}{Theorem}[section]
\newtheorem{proposition}[theorem]{Proposition}
\newtheorem{lemma}[theorem]{Lemma}
\theoremstyle{definition}
\theoremstyle{remark}
\newtheorem{remark}[theorem]{Remark}
\title[Relative residual bounds for eigenvalues]{Relative residual bounds for eigenvalues in gaps of the essential spectrum}
\subjclass[2010]{Primary 15A18; Secondary 47A20, 47A55, 47B15, 49Rxx}
\keywords{Relative distance between eigenvalues, gap of the essential spectrum, compression of a self-adjoint operator,
variational values, maximal angle between closed subspaces}
\date{}
\author[A.\ Seelmann]{Albrecht Seelmann}
\address{A.~Seelmann,
 Technische Univer\-si\-t\"at Dortmund, Fakult\"at f\"ur Mathematik, D-44221 Dortmund, Germany}
\email{albrecht.seelmann@tu-dortmund.de}
\begin{document}

\begin{abstract}
  The relative distance between eigenvalues of the compression of a not necessarily semibounded self-adjoint operator to a closed
  subspace and some of the eigenvalues of the original operator in a gap of the essential spectrum is considered. It is shown that
  this distance depends on the maximal angles between pairs of associated subspaces. This generalises results by Drma\v{c} in
  [Linear Algebra Appl.~\textbf{244} (1996), 155--163] from matrices to not necessarily (semi)bounded operators.
\end{abstract}

\maketitle

\section{Introduction and main results}\label{sec:intro}

Let $H$ be a not necessarily semibounded self-adjoint operator in a Hilbert space $\cH$ with bounded inverse. We denote by
$\lambda_j \in (0,\infty)$ the $j$-th positive eigenvalue of $H$ below $\inf\spec_\ess(H) \cap (0,\infty)$, in increasing order
and counting multiplicities, provided that this eigenvalue exists.

Let $\cU$ be a finite dimensional subspace of $\Dom(H)$, and write $P_\cU$ for the orthogonal projection onto $\cU$.
In the Hilbert space $\cU$ we then consider the compression $M$ of $H$ to $\cU$, that is, the self-adjoint operator
\begin{equation*}
  M
  =
  P_\cU H|_\cU
  \colon
  \cU \to \cU
  ,
\end{equation*}
with eigenvalues
\begin{equation*}
  \mu_1
  \leq
  \dots
  \leq
  \mu_m
  ,\quad
  m = \dim\cU
  .
\end{equation*}

Under suitable additional assumptions on $\cU$, one expects at least some of the eigenvalues of $M$ to be close to certain
eigenvalues of $H$ in a relative sense; cf.\ \cite{Drm96}. In order to make this precise, consider the finite dimensional
subspaces
\begin{equation*}
  \cV = \Ran H|_\cU
  \quad\text{ and }\quad
  \cW = \Ran H^{-1}|_\cU
  ,
\end{equation*}
and denote by $P$ the (in general non-orthogonal) projection in $\cH$ onto $\cV$ along the orthogonal complement $\cW^\perp$ of
$\cW$; it will be established in Lemma~\ref{lem:projection} below that $P$ always exists and is given by $P = HP_\cU H^{-1}$. The
main result of this note now generalises Theorem~3 in \cite{Drm96} from matrices to the current setting of (unbounded) operators
$H$.

\begin{theorem}\label{thm:genFinite}
  Let $H$, $\lambda_j$, $\cU$, $M$, $\mu_k$, and $P$ be as above, and suppose that $\eta := \norm{ P_\cU - P } < 1$. Then:
  \begin{enumerate}
    \renewcommand{\theenumi}{\alph{enumi}}

    \item
    $M$ is invertible.

    \item
    If numbers $m_0, m_1 \in \NN$ with $m_0 \leq m_1 \leq \dim\cU$ satisfy $\mu_{m_0} > 0$ and $\mu_{m_1} < (1-\eta)d$, where
    $d := \inf(\spec_\ess(H) \cap (0,\infty)) \in (0,\infty]$, then $H$ has at least $m_1 - m_0 + 1$ positive eigenvalues below
    $d$, counting multiplicities, and there are indices $j_{m_0} < \dots < j_{m_1}$ with
    \begin{equation}\label{eq:relBound}
      \frac{\abs{\lambda_{j_k} - \mu_k}}{\lambda_{j_k}}
      \leq
      \eta
      \quad\text{ for all }\
      m_0
      \leq
      k
      \leq
      m_1
      .
    \end{equation}
  \end{enumerate}
\end{theorem}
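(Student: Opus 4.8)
The plan is to reduce both assertions to a single geometric inequality expressing that the maximal angle between $\cU$ and $\cV = \Ran H|_\cU$ is small. Using the identity $P = H P_\cU H^{-1}$ from Lemma~\ref{lem:projection}, for every $x \in \cU$ one has $P_\cU x = x$ and hence $P H x = H P_\cU H^{-1} H x = H x$; therefore
\[
  (P_\cU - P) H x = P_\cU H x - H x = -(I - P_\cU) H x ,
\]
so that $\norm{(I - P_\cU) H x} \le \eta \norm{H x}$ for all $x \in \cU$. Part~(a) is then immediate: if $M x = 0$ for some $x \in \cU$, then $P_\cU H x = 0$, whence $(I - P_\cU) H x = H x$, and the inequality forces $\norm{H x} \le \eta \norm{H x}$ with $\eta < 1$; thus $H x = 0$ and, $H$ being invertible, $x = 0$.

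For part~(b) I would first localise each relevant $\mu_k$ spectrally. Let $x_k \in \cU$ be a normalised eigenvector of $M$ with $M x_k = \mu_k x_k$, i.e.\ $P_\cU H x_k = \mu_k x_k$. Then $(I - P_\cU) H x_k = (H - \mu_k) x_k$, so the geometric inequality yields the relative residual bound $\norm{(H - \mu_k) x_k} \le \eta \norm{H x_k}$. Writing $\rho_k := \min_{\lambda \in \spec(H)} \abs{\lambda - \mu_k}/\abs{\lambda}$ and using $\abs{\lambda - \mu_k} \ge \rho_k \abs{\lambda}$ pointwise in the spectral calculus, I get $\norm{(H - \mu_k) x_k} \ge \rho_k \norm{H x_k}$, hence $\rho_k \le \eta$. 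Thus there is $\lambda \in \spec(H)$ with $\lambda \in I_k := [\mu_k/(1+\eta),\, \mu_k/(1-\eta)]$. For $m_0 \le k \le m_1$ the hypotheses $\mu_{m_0} > 0$, $\mu_{m_1} < (1-\eta)d$, and $\eta < 1$ give $I_k \subseteq (0, d)$; since $0 \notin \spec(H)$ and there is no essential spectrum below $d$, the set $\spec(H) \cap (0,d)$ consists only of the eigenvalues $\lambda_j$, and this produces, for each single $k$, an eigenvalue below $d$ at relative distance $\le \eta$ from $\mu_k$.

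The main obstacle is to promote these one-at-a-time statements to the ordered conclusion: strictly increasing indices $j_{m_0} < \dots < j_{m_1}$ and at least $m_1 - m_0 + 1$ eigenvalues below $d$. Here the bare single-vector residual bound is insufficient, and I would argue by dimension counting. The estimate $\norm{(H-\mu_k)x_k}^2 \le \eta^2 \norm{Hx_k}^2$ rewrites, via the spectral theorem, as a nonpositive integral of $f_k(\lambda) = (1-\eta^2)\lambda^2 - 2\mu_k\lambda + \mu_k^2$ against the spectral measure of $x_k$, and $f_k \le 0$ holds precisely on $I_k$; since $\spec(H) \cap I_k$ is finite and isolated, the spectral gaps bordering $I_k$ should allow one to control $\norm{(I - \EE_H(I_k)) x_k}$. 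The orthonormality of $x_{m_0}, \dots, x_{m_1}$ together with the monotonicity of $t \mapsto \dim \Ran \EE_H((0,t))$ should then force enough eigenvalues into the ordered family of intervals. Concretely, it suffices to establish $\dim \Ran \EE_H\bigl([\mu_p/(1+\eta),\, \mu_q/(1-\eta)]\bigr) \ge q - p + 1$ for all $m_0 \le p \le q \le m_1$, after which an order-preserving (Hall-type) matching selects indices $j_k$ with $\lambda_{j_k} \in I_k$, equivalently $\abs{\lambda_{j_k} - \mu_k}/\lambda_{j_k} \le \eta$, and the case $p = m_0$, $q = m_1$ gives the asserted count. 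I expect this uniform counting estimate, rather than the mere nonemptiness of $\spec(H) \cap I_k$, to be the crux, and to be exactly the point where quantitative control of the maximal angles between the associated subspaces and of the variational values in the gap $(0,d)$ must enter.
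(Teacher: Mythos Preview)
Your argument for part~(a) is correct and pleasantly direct; the paper obtains invertibility of $M$ only as a byproduct of the global perturbation machinery (Kato--Rellich plus a spectral gap for $H_\diag$), so your route is shorter here.

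Part~(b), however, is incomplete precisely where you say it is. The single-vector residual bound $\norm{(H-\mu_k)x_k}\le\eta\norm{Hx_k}$ and the ensuing localisation $\spec(H)\cap I_k\neq\emptyset$ are fine, but the counting inequality $\dim\Ran\EE_H\bigl([\mu_p/(1+\eta),\mu_q/(1-\eta)]\bigr)\ge q-p+1$ is not established, only conjectured. If one tries to prove it by taking a combination $x=\sum_{k=p}^{q}c_kx_k$ annihilated by $\EE_H(\Omega)$, the spectral mass of $x$ may split between the two components $(-\infty,\mu_p/(1+\eta))$ and $(\mu_q/(1-\eta),\infty)$, and the two scalar constraints you have at hand, namely $(1-\eta^2)\norm{Hx}^2\le\norm{Mx}^2$ and $\langle x,Hx\rangle=\langle x,Mx\rangle\in[\mu_p,\mu_q]\,\norm{x}^2$, do not by themselves rule this out without a further quantitative argument. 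So the Hall-matching step is currently resting on an unproved lemma.

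The paper avoids this difficulty altogether by upgrading your inequality from $\cU$ to all of $\Dom(H)$. The identity $H_\off H^{-1}=(P_\cU-P_\cU^\perp)(P_\cU-P)$ (Lemma~\ref{lem:reprHoffH}) gives $\norm{H_\off x}\le\eta\norm{Hx}$ for every $x\in\Dom(H)$, not just $x\in\cU$; your bound is the restriction of this to $\cU$. With the global bound one can treat $H_\diag=H-H_\off$ as a relatively bounded perturbation of $H$ and apply the minimax principle for eigenvalues in gaps (Proposition~\ref{prop:DES}) simultaneously to $H$ and $H_\diag$ with the \emph{same} trial decomposition $\Ran\EE_H((\alpha,\infty))\oplus\Ran\EE_H((-\infty,\alpha])$. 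This yields the ordered two-sided inequalities $(1-\eta)\lambda_{\alpha,j}(H)\le\lambda_{(1-\eta)\alpha,j}(H_\diag)\le(1+\eta)\lambda_{\alpha,j}(H)$ for all $j$ at once, and since $M$ is a diagonal block of $H_\diag$, each $\mu_k$ equals some $\lambda_{(1-\eta)\alpha,j_k}(H_\diag)$ with strictly increasing $j_k$. The monotone matching and the eigenvalue count then come for free, with no separate combinatorial step.
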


Roughly speaking, Theorem~\ref{thm:genFinite} states that if $\eta < 1$, then \emph{small enough} positive eigenvalues of $M$ can
be matched to certain positive eigenvalues of $H$ with a suitable relative bound. Here, small enough refers to being well below a
threshold close to the bottom of the positive essential spectrum of $H$, cf.\ parts~(1) and~(2) of Remark~\ref{rem:genFinite}
below. As in \cite{Drm96,Gru06,GN12}, the proof of Theorem~\ref{thm:genFinite} relies on perturbing $H$ into its diagonal part
with respect to the decomposition $\Ran P_\cU \oplus \Ran (I-P_\cU)$, which is reduced by $\cU$ with corresponding part $M$, see
Section~\ref{sec:geomRelBounds} below. Note also that the subspace $\cU$ is invariant (and then, in fact, reducing) for $H$ if
and only if $\cV \subset \cU$. In this case, one even has $\cV = \cU = \cW$ and, therefore, $P = P_\cU$, see
Lemma~\ref{lem:invariant} below. In this respect, the norm of the difference $P_\cU - P$ can be regarded as an appropriate
measure for how far $\cU$ is off from being an invariant subspace for $H$. Also, if $H = H^{-1}$, then we have $\cV = \cW$ and,
thus, $P = P_\cV = P_\cW$.

\begin{remark}\label{rem:genFinite}
  (1)
  If $H$ has no positive essential spectrum at all, that is, if $d = \infty$, then the condition $\mu_{m_1} < (1-\eta)d$ in
  part~(b) of Theorem~\ref{thm:genFinite} is automatically satisfied and \emph{all} positive eigenvalues of $M$ can be matched to
  some positive eigenvalues of $H$, provided that $\eta < 1$.

  (2)
  It is worth to note that the bound \eqref{eq:relBound} together with $\mu_k < (1-\eta)d$ indeed entails $\lambda_{j_k} < d$. In
  this regard, it is a priori not possible to obtain in Theorem~\ref{thm:genFinite} analogous statements for eigenvalues
  $\mu_k \geq (1-\eta)d$. In fact, $H$ may not even have correspondingly many positive eigenvalues below $d$.

  (3)
  As already mentioned in \cite[Remark~2.3]{GV06}, a bound of the form \eqref{eq:relBound} also yields the relative bound
  \[
    \frac{\abs{\lambda_{j_k} - \mu_k}}{\mu_k}
    =
    \frac{\frac{\abs{\lambda_{j_k} - \mu_k}}{\lambda_{j_k}}}{1-\frac{\lambda_{j_k} - \mu_k}{\lambda_{j_k}}}
    \leq
    \frac{\eta}{1-\eta}
    \quad\text{ for all }\
    m_0 \leq k \leq m_1
    .
  \]

  (4)
  Upon replacing $H$ and $M$ by $-H$ and $-M$, respectively, one gets the analogous statement of Theorem~\ref{thm:genFinite} for
  negative eigenvalues in the gap of the essential spectrum.

  (5)
  Similar statements regarding eigenvalues in gaps of the essential spectrum not containing zero are also possible (while still
  keeping the requirement of bounded invertibility of $H$), but this then requires a stronger assumption on the norm
  $\norm{P_\cU - P}$ depending on the gap under consideration, see Remark~\ref{rem:gap} below. The latter can, of course,
  formally be avoided with a suitable spectral shift of $H$ (and $M$), but this then also affects the subspaces $\cV$ and $\cW$
  and, thus, the projection $P$.
\end{remark}

Let us now compare Theorem~\ref{thm:genFinite} to \cite[Theorem~3]{Drm96} and comment on other related results in the literature.

\begin{remark}\label{rem:Drmac}
  (1)
  If $J \colon \cK \to \cH$ is an isometry from some Hilbert space $\cK$ with range $\cU$, then the operator $M$ is unitarily
  equivalent to $J^*HJ$. In this sense, the above setting is consistent with the framework of~\cite{Drm96}.

  (2)
  It is easily seen that $P_\cU - P = (P_\cU - P_\cU^\perp)(P_\cU(I-P) + P_\cU^\perp P)$, where $P_\cU - P_\cU^\perp$ is unitary;
  cf.~the proof of Lemma~\ref{lem:reprHoffH} below. In particular, we have $\norm{P_\cU - P} = \norm{P_\cU(I-P) + P_\cU^\perp P}$.
  Taking into account parts~(1) of this remark and of Remark~\ref{rem:genFinite}, Theorem~\ref{thm:genFinite} therefore
  indeed contains \cite[Theorem~3]{Drm96} as a special case and, thus, generalises it from matrices to (possibly unbounded)
  operators $H$.

  (3)
  To the best of the author's knowledge, Theorem~\ref{thm:genFinite} is the first result of this kind applicable for gaps in the
  essential spectrum of not necessarily semibounded operators $H$. By contrast, for nonnegative operators $H$ stronger results
  have been obtained in \cite{GV06,GV07} for eigenvalues below the essential spectrum. In particular, \cite[Theorem~2.2]{GV06}
  allows to consider subspaces $\cU$ in the \emph{form domain} of $H$ and provides a stronger relative bound already in the case
  of matrices considered earlier in \cite{Drm96}, cf.~\cite[Example~10]{Drm96}.
\end{remark}

The following result gives a geometric bound on the norm of the difference $P_\cU - P$ in terms of the maximal angles between the
pairs of subspaces $(\cU,\cV)$, $(\cU,\cW)$, and $(\cV,\cW)$. In this regard, it recovers Proposition~5 in~\cite{Drm96} in the
current setting. Recall that the~\emph{maximal angle} $\theta(\cM,\cN)$ between two closed subspaces $\cM$ and $\cN$ of $\cH$ can
be defined as
\begin{equation*}
  \theta(\cM,\cN) = \arcsin( \norm{P_\cM - P_\cN} ),
\end{equation*}
see, e.g.,~\cite[Definition~2.1]{AM13}.

\begin{theorem}\label{thm:angleFinite}
  Let $\cU$, $\cV$, $\cW$, and $P$ be as in Theorem~\ref{thm:genFinite}. Then
  \begin{equation*}
    \norm{P_\cU - P}
    \leq
    \min\bigl\{ \sin\theta(\cU,\cV) , \sin\theta(\cU,\cW) \bigr\} + \tan\theta(\cV,\cW)
    .
  \end{equation*}
\end{theorem}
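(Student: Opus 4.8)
The plan is to reach $P$ from $P_\cU$ through the orthogonal projections $P_\cV$ and $P_\cW$ and to absorb the non-orthogonality of $P$ into the angle $\theta(\cV,\cW)$. Using the definition of the maximal angle, the triangle inequality gives
\begin{equation*}
  \norm{P_\cU - P} \leq \norm{P_\cU - P_\cV} + \norm{P_\cV - P} = \sin\theta(\cU,\cV) + \norm{P_\cV - P}
\end{equation*}
and likewise $\norm{P_\cU - P} \leq \sin\theta(\cU,\cW) + \norm{P_\cW - P}$. Taking the minimum of these two bounds, the assertion reduces to the two estimates $\norm{P_\cV - P} \leq \tan\theta(\cV,\cW)$ and $\norm{P_\cW - P} \leq \tan\theta(\cV,\cW)$.

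For the first of these I would use that $P$ is the bounded idempotent with $\Ran P = \cV$ and $\Ker P = \cW^\perp$. Since $P$ acts as the identity on $\cV = \Ran P_\cV$, we have $PP_\cV = P_\cV$, so $P - P_\cV = P(I - P_\cV)$ annihilates $\cV$ and maps into $\cV$; writing $B := (P - P_\cV)|_{\cV^\perp} \colon \cV^\perp \to \cV$ we thus get $\norm{P - P_\cV} = \norm{B}$ and $Px = P_\cV x + B(I - P_\cV)x$. Feeding $\Ker P = \cW^\perp$ into this representation identifies $\cW = (\cW^\perp)^\perp$ as the graph $\{\, a + B^*a : a \in \cV \,\}$ of $B^* \colon \cV \to \cV^\perp$. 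From here the angle can be read off directly: for $0 \neq a \in \cV$ the vector $a + B^*a \in \cW$ has orthogonal projection $a$ onto $\cV$, so minimising the ratio $\norm{a}/\norm{a + B^*a} = \norm{a}/\sqrt{\norm{a}^2 + \norm{B^*a}^2}$ over $a$ (the supremum $\norm{B^*a}/\norm{a} = \norm{B}$ being attained since $\cV$ is finite dimensional) yields $\cos\theta(\cV,\cW) = 1/\sqrt{1 + \norm{B}^2}$ and hence $\tan\theta(\cV,\cW) = \norm{B} = \norm{P - P_\cV}$. Here I use that $\dim\cV = \dim\cW = \dim\cU$, so that $\sin\theta(\cV,\cW) = \norm{P_\cV - P_\cW}$ is governed by the largest principal angle between $\cV$ and $\cW$; the degenerate case $\theta(\cV,\cW) = \pi/2$ needs no argument, as the right-hand side is then infinite.

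The second estimate follows by symmetry: $I - P$ is the idempotent with range $\cW^\perp$ and kernel $\cV$, and applying the same computation to $I - P$ with the roles of $(\cV, \cW)$ replaced by $(\cW^\perp, \cV^\perp)$ gives $\norm{P_\cW - P} = \norm{(I - P) - P_{\cW^\perp}} = \tan\theta(\cW^\perp, \cV^\perp) = \tan\theta(\cV,\cW)$, the last equality because $\norm{P_{\cV^\perp} - P_{\cW^\perp}} = \norm{P_\cV - P_\cW}$. I expect the main obstacle to be precisely the identity $\norm{P - P_\cV} = \tan\theta(\cV,\cW)$: it requires correctly matching the oblique data $(\Ran P, \Ker P) = (\cV, \cW^\perp)$ to the mutual position of $\cV$ and $\cW$ and knowing that the norm of the difference of two orthogonal projections of equal finite rank equals the sine of the largest principal angle. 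The remaining ingredients---the two triangle inequalities and the passage to the minimum---are routine.
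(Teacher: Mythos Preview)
Your argument is correct and follows the same overall route as the paper: both use the triangle-inequality decompositions $P_\cU - P = (P_\cU - P_\cV) + (P_\cV - P) = (P_\cU - P_\cW) + (P_\cW - P)$ and then reduce everything to the identity $\norm{P_\cV - P} = \norm{P_\cW - P} = \tan\theta(\cV,\cW)$. The only difference is in how that identity is obtained. The paper works in the decomposition $\cW \oplus \cW^\perp$, writes $P$ and $P_\cV$ as explicit $2\times 2$ block matrices in terms of $X = P_\cW^\perp P|_\cW$ (Lemma~\ref{lem:annular}), and then invokes the known relation $\norm{X} = \tan\theta(\cV,\cW)$ for graph subspaces from \cite{KMM03:181}; this works verbatim for infinite-dimensional $\cU$. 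You instead work in $\cV \oplus \cV^\perp$, realise $\cW$ as the graph of $B^* = (P - P_\cV)^*|_\cV$, and compute $\cos\theta(\cV,\cW)$ directly from $\inf_{w\in\cW}\norm{P_\cV w}/\norm{w}$, using the equal finite dimensions of $\cV$ and $\cW$ to justify that this infimum really gives the cosine of the maximal angle; the second identity you obtain by the duality $P \leftrightarrow I-P$. Your version is more self-contained but leans on finite-dimensionality, while the paper's is shorter by outsourcing the tangent identity to the literature and covers the general closed-subspace case treated in Theorem~\ref{thm:angle}.
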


The rest of this note is organised as follows: Section~\ref{sec:relBounds} presents a general perturbation result that addresses
relative bounds for eigenvalues in gaps of the essential spectrum. In essence, it reproduces a result from \cite{Ves08} in an
operator framework, but is proved here in an alternative way using the variational principle from \cite{DES00,DES23}.
Section~\ref{sec:geomRelBounds} then adds a geometric component in terms of the projections $P_\cU$, $P_\cV$, $P_\cW$, and $P$
that allows to infer from the general result in Section~\ref{sec:relBounds} the core result of this note, Theorem~\ref{thm:gen}.
The latter includes Theorem~\ref{thm:genFinite} as a particular case, while allowing the subspace $\cU$ to have infinite
dimension. A likewise more general version of Theorem~\ref{thm:angleFinite}, Theorem~\ref{thm:angle}, is also proved in that
section utilizing known results on maximal angles between closed subspaces.

\section{Relative bounds for eigenvalues}\label{sec:relBounds}

In this section we prove a general residual bound for eigenvalues in gaps of the essential spectrum of self-adjoint operators,
which lays the foundation for the proof of Theorem~\ref{thm:genFinite}. The corresponding result essentially reproduces
\cite[Theorem~4.13]{Ves08} in the particular case of an operator framework; see also \cite[Theorem~3.16]{VS93} for the matrix
case.

For a self-adjoint operator $T$, we denote by $\EE_T$ the projection-valued spectral measure for $T$, and for $\gamma \in \RR$
we write $\lambda_{\gamma,j}(T) = \lambda_{j}(T|_{\Ran\EE_T((\gamma,\infty))}) \geq \gamma$, $j \in \NN$,
$j \leq \dim \Ran\EE_T((\gamma,\infty))$, for the $j$-th standard variational value of the lower semibounded part
$T|_{\Ran\EE((\gamma,\infty))}$ of $T$. It agrees with the $j$-th eigenvalue of $T|_{\Ran\EE((\gamma,\infty))}$ below its
essential spectrum, in nondecreasing order and counting multiplicities, if this eigenvalue exists, and otherwise equals the bottom
of the essential spectrum of $T|_{\Ran\EE((\gamma,\infty))}$. In fact, if $\Ran\EE((\gamma,\infty))$ is infinite dimensional,
then $\lambda_{\gamma,j}(T) \to \inf(\spec_\ess(T) \cap (\gamma,\infty)) \in [\gamma,\infty]$ as $j \to \infty$.

Let $A$ be self-adjoint, and let $V$ be symmetric with $\Dom(V) \supset \Dom(A)$. Suppose that for some constants $a \in \RR$,
$b \in [0,1)$ the operator $A_1 := a + b\abs{A}$ is nonnegative and that $\norm{ Vx } \leq \norm{ A_1 x }$ for all
$x \in \Dom(A)$. In particular, this gives $\norm{ Vx } \leq \abs{a} \norm{ x } + b \norm{ Ax }$ for all $x \in \Dom(A)$, so that
$B := A + V$ is self-adjoint on $\Dom(B) = \Dom(A)$ by the well-known Kato-Rellich theorem.
The following result is used in Section~\ref{sec:geomRelBounds} below only in the particular case where $a = 0$. However, the
more general case of $a \in \RR$ does not require much more efforts and is more in line with the mentioned guiding statement from
\cite{Ves08}.

\begin{proposition}\label{prop:eig}
  Let the interval $(\alpha , \beta)$ with $\beta - \alpha > 2a + b(\abs{\alpha} + \abs{\beta})$
  be in the resolvent set of $A$. Then:
  \begin{enumerate}
    \renewcommand{\theenumi}{\alph{enumi}}

    \item
    The interval $( \alpha + b\abs{\alpha} + a , \beta - b\abs{\beta} - a )$ belongs to the resolvent set of $B = A + V$.

    \item
    The subspace $\Ran\EE_A((\alpha,\infty))$ has finite dimension if and only if $\Ran\EE_B((\alpha+b\abs{\alpha}+a,\infty))$
    has finite dimension, and in this case $\dim\Ran\EE_A((\alpha,\infty)) = \dim\Ran\EE_B((\alpha+b\abs{\alpha}+a,\infty))$
    holds.

    \item
    We have
    \begin{equation*}
      \abs{ \lambda_{\alpha,j}(A) - \lambda_{\alpha+b\abs{\alpha}+a,j}(B) }
      \leq
      a + b\abs{\lambda_{\alpha,j}(A)}
    \end{equation*}
    for all $j \in \NN$ with $j \leq \dim\Ran\EE_A((\alpha,\infty))$.

    \item
    With $d := \inf( \spec_\ess(A) \cap (\alpha,\infty) ) \in [\beta , \infty]$ we have
    \begin{equation*}
      d - b\abs{d} - a
      \leq
      \inf\bigl( \spec_\ess(B) \cap (\alpha+b\abs{\alpha}+a,\infty) \bigr)
      \leq
      d + b\abs{d} + a
      ,
    \end{equation*}
    where the lower and upper bounds are interpreted as $\infty$ if $d = \infty$. In particular, the spectral part
    $\spec(B) \cap (\alpha+b\abs{\alpha}+a,\infty)$ is purely discrete if $\spec(A) \cap (\alpha,\infty)$ is purely discrete.

  \end{enumerate}
\end{proposition}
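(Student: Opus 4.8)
My plan is to sandwich $B$ between two functions of $A$ and then transfer the resulting operator inequality to the variational values $\lambda_{\cdot,j}$ by means of the variational principle of \cite{DES00,DES23}. Write $g(t) := t + a + b\abs{t}$ and $h(t) := t - a - b\abs{t}$; since $b < 1$ both are strictly increasing, and $g(\alpha) = \alpha + b\abs{\alpha} + a$, $h(\beta) = \beta - b\abs{\beta} - a$ are exactly the two endpoints appearing in the statement. The structural input is the two-sided form bound $h(A) \leq B \leq g(A)$ on $\Dom(A)$, which I establish first: the hypothesis $\norm{Vx} \leq \norm{A_1 x}$ says precisely that $V^2 \leq A_1^2$ as quadratic forms on $\Dom(A)$, and since $A_1 \geq 0$ and the square root is operator monotone, this yields $\abs{V} \leq A_1$, hence $-A_1 \leq V \leq A_1$ by the spectral theorem for the self-adjoint $V$. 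Adding $A$ and using $A \pm A_1 = A \pm (a + b\abs{A})$, i.e.\ $g(A)$ and $h(A)$ in the functional calculus of $A$, gives $h(A) \leq B \leq g(A)$ on $\Dom(A) = \Dom(B)$. The only delicate point here is operator monotonicity of the square root for these unbounded forms, which I would reduce to the bounded case by a cut-off.

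For part~(a) I would argue directly with a resolvent estimate. Fix $\lambda$ in the open interval $(\alpha + b\abs{\alpha}+a,\, \beta - b\abs{\beta}-a)$. From $g(t) \leq g(\alpha)$ for $t \leq \alpha$ and $h(t) \geq h(\beta)$ for $t \geq \beta$ it follows that $\lambda \notin [h(t), g(t)]$, equivalently $a + b\abs{t} < \abs{t-\lambda}$, for every $t \in \spec(A) \subseteq \RR \setminus (\alpha,\beta)$; in particular $\lambda$ lies in the resolvent set of $A$. Since moreover the ratio $(a+b\abs{t})/\abs{t-\lambda}$ is continuous in $t$ and tends to $b < 1$ as $\abs{t} \to \infty$, there is a constant $\kappa < 1$ with $(a + b\abs{t})^2 \leq \kappa^2 (t-\lambda)^2$ throughout $\spec(A)$. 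Integrating against the spectral measure of $A$ gives $\norm{Vx} \leq \norm{A_1 x} \leq \kappa\norm{(A-\lambda)x}$ for all $x \in \Dom(A)$, whence $\norm{(B-\lambda)x} \geq (1-\kappa)\norm{(A-\lambda)x} \geq (1-\kappa)\,\mathrm{dist}(\lambda,\spec(A))\norm{x}$, so that $B - \lambda$ is boundedly invertible.

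For parts~(b)--(d) I would invoke the variational principle with the \emph{single} orthogonal decomposition $\cH = \cH_+ \oplus \cH_-$, where $\cH_+ := \Ran\EE_A((\alpha,\infty))$ and $\cH_- := \Ran\EE_A((-\infty,\alpha])$. For a self-adjoint $T$ with $\Dom(T) = \Dom(A)$ set
\[
  \mu_j(T) := \inf_{\substack{F \subseteq \cH_+ \cap \Dom(A) \\ \dim F = j}}\ \sup_{\substack{0 \neq x \in (F \oplus \cH_-)\cap\Dom(A)}} \frac{\langle Tx, x\rangle}{\norm{x}^2}.
\]
For $g(A)$ and $h(A)$ this decomposition is exactly the spectral one (at the levels $g(\alpha)$ and $h(\alpha)$, as $g,h$ are increasing), so the minimax reduces to the ordinary one and yields $\mu_j(g(A)) = g(\nu_j)$ and $\mu_j(h(A)) = h(\nu_j)$, where $\nu_j := \lambda_{\alpha,j}(A)$. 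For $B$ I would keep the same decomposition; from $B \leq g(A)$ and $g(A) \leq g(\alpha)$ on $\cH_-$ one gets $\langle Bx,x\rangle \leq g(\alpha)\norm{x}^2$ for $x \in \cH_- \cap \Dom(A)$, so that the decomposition is admissible for $B$ and, by \cite{DES00,DES23}, $\mu_j(B) = \lambda_{c,j}(B)$ with $c := \sup_{0 \neq x \in \cH_-\cap\Dom(A)} \langle Bx,x\rangle/\norm{x}^2 \leq g(\alpha)$. Because the three minimax expressions share the same test family, the form bound $h(A) \leq B \leq g(A)$ passes to them by monotonicity of the inf--sup, giving $h(\nu_j) \leq \mu_j(B) \leq g(\nu_j)$ for every admissible $j$.

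Finally I would identify $\mu_j(B)$ with $\lambda_{\alpha+b\abs{\alpha}+a,\,j}(B)$. Since $(\alpha,\beta) \subseteq$ the resolvent set of $A$ forces $\nu_j \geq \beta$, the lower bound gives $\mu_j(B) \geq h(\nu_j) \geq h(\beta) = \beta - b\abs{\beta} - a$; as this strictly exceeds $\alpha + b\abs{\alpha} + a \geq c$, the operator $B$ has no spectrum in $(c,\,\alpha + b\abs{\alpha}+a]$, so $\lambda_{c,j}(B) = \lambda_{\alpha+b\abs{\alpha}+a,\,j}(B)$ for all $j$, and $h(\nu_j) \leq \mu_j(B) \leq g(\nu_j)$ becomes precisely the bound in~(c). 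Matching the index ranges $j \leq \dim\cH_+ = \dim\Ran\EE_A((\alpha,\infty))$ then yields the dimension equality in~(b), and letting $j \to \infty$ (so that $\nu_j \to d$) turns the same two-sided bound into the essential-spectrum estimate in~(d), with the convention $h(\infty) = g(\infty) = \infty$. I expect the main obstacle to be the rigorous application of the variational principle to $B$ through the decomposition adapted to $A$ rather than to $B$: one must verify the hypotheses of \cite{DES00,DES23} for this non-spectral decomposition and carefully match the index ranges and essential-spectrum thresholds, so that the minimax values are exactly the intended eigenvalues of $B$.
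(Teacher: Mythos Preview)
Your approach is essentially the paper's: sandwich $B$ between $f_-(A)$ and $f_+(A)$ (your $h(A)$ and $g(A)$), apply the Dolbeault--Esteban--S\'er\'e minimax with the single decomposition $\Lambda = \EE_A((\alpha,\infty))$, and read off (b)--(d) from the monotonicity of the shared inf--sup together with the spectral mapping $\lambda_{f_\pm(\alpha),j}(f_\pm(A)) = f_\pm(\lambda_{\alpha,j}(A))$. Two minor differences are worth noting. For (a) you give a direct Neumann-type resolvent estimate ($\norm{A_1(A-\lambda)^{-1}} < 1$), which is a clean self-contained alternative; the paper instead extracts (a) from the DES principle itself, using that the interval between the two sides of the gap condition lies in the resolvent set of $B$. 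In your derivation of the form bound, passing through ``$\abs{V} \leq A_1$'' is slippery because $V$ is merely symmetric on $\Dom(A)$, so $\abs{V}$ and $V^2$ as operators are not readily available; the paper bypasses this by citing L\"owner's theorem (Kato, Theorem~V.4.12) directly to obtain $\abs{\langle x,Vx\rangle} \leq \langle x,A_1 x\rangle$ from $\norm{Vx} \leq \norm{A_1 x}$, which is precisely the step your cut-off argument would have to reproduce. Finally, for the gap condition for $B$ you check only the upper bound on $\cD_-$; you also need the lower bound $\langle x_+,Bx_+\rangle \geq h(\beta)\norm{x_+}^2$ on $\cD_+$, which follows symmetrically from $B \geq h(A)$ and is exactly what the paper verifies.
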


For the convenience of the reader, a proof of Proposition~\ref{prop:eig} is presented below. Other than the approach in
\cite{Ves08}, which was based on analyticity properties, this proof alternatively relies on the minimax principle from
\cite{DES00,DES23} for eigenvalues in gaps of the essential spectrum. The following proposition formulates a variant of this
result tailored to the current situation; cf.\ also \cite{SST20,DES}.

\begin{proposition}[{\cite[Theorem~1]{DES23}}]\label{prop:DES}
  Let $T$ be self-adjoint, and let $\Lambda$ be an orthogonal projection in the same Hilbert space such that $\Dom(T)$ is
  invariant for $\Lambda$. With $\cD_+ := \Dom(T) \cap \Ran\Lambda$ and $\cD_- := \Dom(T) \cap \Ran(I-\Lambda)$, suppose that
  \begin{equation}\label{eq:gapCondition}
    \nu
    :=
    \sup_{\substack{x_- \in \cD_-\\ \norm{x_-} = 1}} \langle x_- , Tx_- \rangle
    <
    \inf_{\substack{x_+ \in \cD_+\\ \norm{x_+} = 1}} \langle x_+ , Tx_+ \rangle
    .
  \end{equation}
  Then,
  \begin{equation}\label{eq:minimax}
    \lambda_{\nu,j}(T)
    =
    \inf_{\substack{\fM \subset \cD_+\\ \dim\fM = j}} \sup_{\substack{x \in \fM \oplus \cD_-\\ \norm{x} = 1}}
      \langle x , Tx \rangle
  \end{equation}
  for $j \in \NN$ with $j \leq \dim\Ran\Lambda$, and these describe all variational values of the lower semibounded
  part $T|_{\Ran\EE_T((\nu,\infty))}$ of $T$.
\end{proposition}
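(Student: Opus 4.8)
The plan is to identify the right-hand side of \eqref{eq:minimax} with the Courant--Fischer values of the semibounded part $T|_{\Ran\EE_T((\nu,\infty))}$. Write $\Pi := \EE_T((\nu,\infty))$ and abbreviate $c_j := \lambda_{\nu,j}(T)$. The first step I would take is a reformulation of the inner supremum in \eqref{eq:minimax} through a sign condition: for $\lambda > \nu$ and a finite dimensional $\fM \subset \cD_+$ one has $\sup\{\langle x,Tx\rangle : x \in \fM \oplus \cD_-,\ \norm{x}=1\} \le \lambda$ if and only if $\langle x,(T-\lambda)x\rangle \le 0$ for all $x \in \fM \oplus \cD_-$. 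Since the whole minus block $\cD_-$ is available, the latter is in turn equivalent to $t_\lambda[x_+] \le 0$ for every $x_+ \in \fM$, where $t_\lambda[x_+] := \sup_{x_- \in \cD_-}\langle x_+ + x_-,(T-\lambda)(x_+ + x_-)\rangle$. The gap condition \eqref{eq:gapCondition} gives $\langle x_-,(T-\lambda)x_-\rangle \le (\nu-\lambda)\norm{x_-}^2$, so for $\lambda > \nu$ the supremum defining $t_\lambda$ is that of a strictly concave, coercive form in $x_-$; it is attained at a unique maximiser depending linearly on $x_+$, and $t_\lambda$ is a genuine quadratic form on $\cD_+$, namely the (form) Schur complement of $T-\lambda$ obtained by eliminating $\cD_-$.

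With this reformulation the right-hand side of \eqref{eq:minimax} becomes $\inf\{\lambda>\nu : \text{there is a $j$-dimensional } \fM\subset\cD_+ \text{ with } t_\lambda|_\fM \le 0\}$, i.e.\ the value of $\lambda$ at which the maximal subspace on which $t_\lambda$ is nonpositive first reaches dimension $j$. The analytic core of the proof is then the inertia identity asserting that, for $\lambda$ outside $\spec(T)$, the number of negative directions of $t_\lambda$ equals $\dim\Ran\EE_T((\nu,\lambda))$, that is, the number of values $c_i$ below $\lambda$. I would obtain this from Haynsworth-type inertia additivity for the block form of $T-\lambda$ with respect to $\Ran\Lambda\oplus\Ran(I-\Lambda)$: eliminating the uniformly negative minus block contributes exactly $\dim\cD_-$ negative directions, while the gap condition \eqref{eq:gapCondition} forces $\Ran\EE_T((-\infty,\nu])$ to be a graph over $\Ran(I-\Lambda)$ of the same dimension, so that the surviving negative inertia of the Schur complement counts precisely the spectrum of $T$ in $(\nu,\lambda)$. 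Monotonicity of $\lambda \mapsto t_\lambda$ makes this count nondecreasing, and matching its jump values with the $c_j$ yields $\lambda_{\nu,j}(T)$ on the left of \eqref{eq:minimax} for all admissible $j$, along with the assertion that these exhaust the variational values of $T|_{\Ran\Pi}$.

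The step I expect to be most delicate is making the Schur complement $t_\lambda$ and the inertia identity rigorous in the unbounded, form-theoretic setting rather than for matrices. Concretely, one must verify that the inner supremum is attained with the maximiser lying in $\cD_-$ (this uses $x_+ \in \Dom(T)$, guaranteed since $\fM \subset \cD_+$, together with the uniform negativity from \eqref{eq:gapCondition}), that $t_\lambda$ is a closed, monotone nonincreasing, and real-analytic family on $(\nu,\infty)\setminus\spec(T)$, and that the inertia count is interpreted via relative dimensions when $\cD_-$ is infinite dimensional. A subsidiary technical point is the careful handling of the crossing values of $\lambda$, where the distinction between the strict and non-strict conditions $t_\lambda < 0$ and $t_\lambda \le 0$ must be reconciled with the multiplicities of the $c_j$; this is routine once the monotone family $\lambda \mapsto t_\lambda$ and its negative-direction count are under control.
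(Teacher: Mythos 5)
First, a point of orientation: the paper does not prove Proposition~\ref{prop:DES} at all. It is quoted (in a tailored form) from the cited corrigendum \cite{DES23}, and Remark~\ref{rem:DES} merely explains why the stricter gap condition \eqref{eq:gapCondition} used here makes the quoted statement a special case of the one established there; the paper's later arguments only ever apply it. So your attempt is not competing with an argument in the paper but with the cited literature, and it has to stand as a self-contained proof of the Dolbeault--Esteban--S\'er\'e minimax principle --- a substantial theorem in its own right.

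Judged that way, your reduction of the inner supremum to the sign of the Schur-complement form $t_\lambda$ is correct, and the Schur-complement/inertia strategy is indeed the one underlying \cite{DES00,DES23} and related work such as \cite{SST20}. But the two load-bearing steps are asserted rather than proved, and they are exactly where the theorem is hard. (a) The inertia identity: Haynsworth additivity is a finite-dimensional matrix fact; when $\dim \cD_- = \infty$, the statement ``negative inertia of $T-\lambda$ equals $\dim\cD_-$ plus the negative inertia of $t_\lambda$'' is a difference of two infinite quantities, and making it meaningful requires a relative-index (Fredholm pair) framework that you never set up --- saying the count ``is interpreted via relative dimensions'' names the problem, it does not solve it. Inside this step sits the further unproven claim that \eqref{eq:gapCondition} forces $\Ran\EE_T((-\infty,\nu])$ to be a graph over $\Ran(I-\Lambda)$: the elementary argument only shows $\Ran\EE_T((-\infty,\nu]) \cap \cD_+ = \{0\}$, but vectors of that spectral subspace need not lie in $\Dom(T)$, so the intersection with all of $\Ran\Lambda$ is not controlled by anything you wrote. (b) The construction of $t_\lambda$: the compression of $T-\lambda$ to $\cD_-$ is only symmetric and $\cD_-$ is not complete in the form norm, so the maximiser exists only in a form completion, and one must pass to a closed form (a Friedrichs-type realization of the Schur complement) and then prove that the count of its negative directions is well defined, monotone in $\lambda$, and equal to $\dim\Ran\EE_T((\nu,\lambda))$, including the boundary case where the minimax levels hit the bottom of the essential spectrum (which is what the exhaustiveness claim in the proposition requires). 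That this circle of verifications is not ``routine'' is documented by the history: the original proof in \cite{DES00} contained an error at precisely this technical level, which is why the corrigendum \cite{DES23} --- the very result you would need to reprove --- exists. As it stands, your proposal is a correct plan whose core is missing.
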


\begin{remark}\label{rem:DES}
  The inequality \eqref{eq:gapCondition} is usually called a \emph{gap condition} for $T$. In \cite{DES00,DES23}, the right-hand
  side of \eqref{eq:gapCondition} is replaced by the possibly larger term
  \begin{equation*}
    \inf_{x_+ \in \cD_+ \setminus \{0\}} \sup_{x_- \in \cD_-} \frac{\langle x_+ + x_- , T(x_+ + x_-)
      \rangle}{\norm{x_+ + x_-}^2},
  \end{equation*}
  which agrees with the right-hand side of \eqref{eq:minimax} for $j = 1$. In particular, the condition formulated by
  \eqref{eq:gapCondition} is stricter than the corresponding one in \cite{DES00,DES23}. However, it is exactly
  \eqref{eq:gapCondition} that is verified in the proof of Proposition~\ref{prop:eig} below.
\end{remark}

An implicit part of Proposition~\ref{prop:DES} is that under the hypotheses the subspace $\Ran\EE_T((\nu,\infty))$ has
finite dimension if and only if $\Ran\Lambda$ has, and, in this case, the two subspaces have the same dimension. Moreover, the
interval $(\nu,\lambda_{\nu,1}(T))$ belongs to the resolvent set of $T$ and, in particular, so does the interval $(\nu,\nu')$,
where $\nu'$ denotes the right-hind side of \eqref{eq:gapCondition}, cf.\ Remark~\ref{rem:DES}. With this is mind, we are ready
to prove Proposition~\ref{prop:eig}.

\begin{proof}[Proof of Proposition~\ref{prop:eig}]
  We follow the general strategy of the proof of \cite[Theorem~3.16]{VS93}.
  Since by hypothesis $A_1$ is self-adjoint and nonnegative and $V$ is symmetric with $\Dom(V) \supset \Dom(A_1) = \Dom(A)$ and
  $\norm{ Vx } \leq \norm{ A_1x }$ for all $x \in \Dom(A)$, it follows from L\"owner's theorem, see,
  e.g.,~\cite[Theorem~V.4.12]{Kato95}, that
  \begin{equation*}
    \abs{ \langle x , Vx \rangle }
    \leq
    \langle x , A_1 x \rangle
    \quad\text{ for all }\
    x \in \Dom(A)
    .
  \end{equation*}
  As a consequence, we have
  \begin{equation}\label{eq:monotonicity}
    A - A_1
    \leq
    B
    \leq
    A + A_1
  \end{equation}
  in the sense of quadratic forms, where $\Dom(A \pm A_1) = \Dom(A) = \Dom(B)$.

  In the notation of Proposition~\ref{prop:DES}, we take $\Lambda = \EE_A((\alpha,\infty)) = \EE_A([\beta,\infty))$ and
  \[
    \cD_+ = \Dom(A) \cap \Ran\EE_A([\beta,\infty))
    ,\quad
    \cD_- = \Dom(A) \cap \Ran\EE_A((-\infty,\alpha])
    .
  \]
  Define $f_\pm \colon \RR \to \RR$ by $f_\pm(t) = t \pm (a + b\abs{t})$, which both are continuous, bijective, and strictly
  increasing. Taking into account that $A \pm A_1 = f_\pm(A)$ by functional calculus, we then have
  \begin{equation*}
    \langle x, (A-A_1)x \rangle
    \geq
    f_-(\beta)\norm{x}^2
    \quad\text{ for }\quad
    x \in \cD_+
  \end{equation*}
  and
  \begin{equation*}
    \langle x, (A+A_1)x \rangle
    \leq
    f_+(\alpha)\norm{x}^2
    \quad\text{ for }\quad
    x \in \cD_-
    .
  \end{equation*}
  Moreover, the hypothesis on $\alpha$ and $\beta$ guarantees that $f_-(\beta) > f_+(\alpha)$. In light of
  \eqref{eq:monotonicity}, for each of the choices $T \in \{A\pm A_1,B\}$ the gap condition \eqref{eq:gapCondition} is therefore
  satisfied with
  \[
    \sup_{\substack{x_-\in\cD_-\\\norm{x_-}=1}} \langle x_- , Tx_- \rangle
    \leq
    f_+(\alpha)
    <
    f_-(\beta)
    \leq
    \inf_{\substack{x_+\in\cD_+\\\norm{x_+}=1}} \langle x_+ , Tx_+ \rangle
    ,
  \]
  so that Proposition~\ref{prop:DES} can be applied for all three choices. In particular, the interval $(f_+(\alpha),f_-(\beta))$
  belongs to the resolvent set of all three operators $A\pm A_1$ and $B$. With $T = B$, this proves parts (a) and (b) of the
  claim. Furthermore, with $\gamma = f_+(\alpha)$ we obtain from \eqref{eq:monotonicity} and
  the representation of the variational values in \eqref{eq:minimax} that
  \begin{equation}\label{eq:monotonicityMinimax}
    \lambda_{\gamma,j}(A-A_1)
    \leq
    \lambda_{\gamma,j}(B)
    \leq
    \lambda_{\gamma,j}(A+A_1)
  \end{equation}
  for all $j \in \NN$, $j \leq \dim\Ran\EE_A((\alpha,\infty))$. Here, we also have the representation
  $\lambda_{\gamma,j}(A - A_1) = \lambda_{f_-(\alpha),j}(f_-(A))$ since also the interval $(f_-(\alpha),f_-(\beta))$ belongs to
  the resolvent set of $f_-(A)$ by the spectral mapping theorem, as well as trivially
  $\lambda_{\gamma,j}(A + A_1) = \lambda_{f_+(\alpha),j}(f_+(A))$. In turn, again by the spectral mapping theorem, we have
  $\lambda_{f_\pm(\alpha),j}(f_\pm(A)) = f_\pm(\lambda_{\alpha,j}(A))$, so that we arrive at the representations
  $\lambda_{\gamma,j}(A\pm A_1) = f_\pm(\lambda_{\alpha,j}(A))$. Plugging the latter into \eqref{eq:monotonicityMinimax} gives
  \begin{equation}\label{eq:boundEig}
    \lambda_{\alpha,j}(A) - (a+b\abs{\lambda_{\alpha,j}(A)})
    \leq
    \lambda_{\gamma,j}(B)
    \leq
    \lambda_{\alpha,j}(A) + (a+b\abs{\lambda_{\alpha,j}(A)})
  \end{equation}
  for all $j \in \NN$ with $j \leq \dim\Ran\EE_A((\alpha,\infty))$, which proves part (c) of the claim.

  It remains to show part (d). If $\Ran\EE_A((\alpha,\infty))$ has finite dimension, then also $\Ran\EE_B((f_+(\alpha),\infty))$
  has finite dimension by part (b) and there is nothing to prove. So, suppose that $\Ran\EE_A((\alpha,\infty))$, and hence
  also $\Ran\EE_B((\gamma,\infty))$, is infinite dimensional. The claim of part (d) then follows by taking in \eqref{eq:boundEig}
  the limit as $j \to \infty$. This completes the proof.
 \end{proof}%

\section{Geometric residual bounds and proof of main results}\label{sec:geomRelBounds}

A large part of the considerations in this section also works under more general assumptions than the ones from
Section~\ref{sec:intro}. With this in mind, let $H$ and $\lambda_j$ be as in Section~\ref{sec:intro}, and let $\cU$ be a (not
necessarily finite dimensional) closed subspace such that $\Dom(H)$ is invariant for the orthogonal projection $P_\cU$ onto $\cU$;
this obviously includes the case where $\cU$ is just a finite dimensional subspace of $\Dom(H)$ as in Section~1. Let $M$ be the
compression of $H$ to $\cU$, that is,
\begin{equation}\label{eq:defM}
  M = P_\cU H|_\cU
  \quad\text{ with }\
  \Dom(M) = \Dom(H) \cap \cU \subset \cU
  ,
\end{equation}
as an operator in the Hilbert space $\cU$.
Finally, denote by $\cV$ and $\cW$ the closed subspaces
\begin{equation*}
  \cV = \Ran H|_\cU
  \quad\text{ and }\quad
  \cW = \overline{\Ran H^{-1}|_\cU}
  .
\end{equation*}
We begin with the following elementary, essentially well-known lemma.

\begin{lemma}\label{lem:invariant}
  \begin{enumerate}
    \renewcommand{\theenumi}{\alph{enumi}}

    \item
    $\Dom(H)$ is invariant also for $P_\cU^\perp = I - P_\cU$.

    \item
    $\Dom(H)$ splits as
    \begin{equation*}
      \Dom(H) = ( \Dom(H) \cap \cU ) \oplus ( \Dom(H) \cap \cU^\perp ).
    \end{equation*}

    \item
    $M$ is densely defined in $\cU$.

    \item
    If $\cU$ is invariant for $H$, then $\cV = \cW = \cU$.

  \end{enumerate}
\end{lemma}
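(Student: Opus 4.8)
The plan is to settle parts (a)--(c) directly from the standing hypothesis that $\Dom(H)$ is invariant for $P_\cU$, reserving the actual content for part (d). For part (a) I would simply note that $P_\cU^\perp = I - P_\cU$ maps $\Dom(H)$ into itself, since both $I$ and $P_\cU$ do. Part (b) then follows by writing each $x \in \Dom(H)$ as $x = P_\cU x + P_\cU^\perp x$: by part (a) and the hypothesis both summands again lie in $\Dom(H)$, while they lie in $\cU$ and $\cU^\perp$ respectively, so that $x \in (\Dom(H)\cap\cU) \oplus (\Dom(H)\cap\cU^\perp)$; the reverse inclusion and the orthogonality of the sum are immediate. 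For part (c) I would exploit that $\Dom(H)$ is dense in $\cH$ and that $P_\cU$ is bounded and preserves $\Dom(H)$: given $u \in \cU$, pick $x_n \in \Dom(H)$ with $x_n \to u$; then $P_\cU x_n \in \Dom(H) \cap \cU = \Dom(M)$ and $P_\cU x_n \to P_\cU u = u$, so $\Dom(M)$ is dense in $\cU$.

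Part (d) is where the work lies, and the step I expect to be the main obstacle is upgrading the hypothesis ``$\cU$ is invariant for $H$'' to the statement that $P_\cU$ commutes with $H$, i.e.\ that $\cU$ reduces $H$. Assuming $H(\Dom(H)\cap\cU)\subset\cU$, I would first show that $\cU^\perp$ is likewise invariant: for $v\in\Dom(H)\cap\cU^\perp$ and $w\in\Dom(H)\cap\cU$, self-adjointness of $H$ gives $\langle Hv, w\rangle = \langle v, Hw\rangle = 0$, since $Hw\in\cU$ by invariance and $v\in\cU^\perp$; hence $Hv$ is orthogonal to $\Dom(H)\cap\cU$, which is dense in $\cU$ by part (c), so $Hv\in\cU^\perp$. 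This is the delicate point, as it is precisely here that both the density from part (c) and the self-adjointness of $H$ are needed. Decomposing an arbitrary $x\in\Dom(H)$ via part (b) then yields $P_\cU Hx = HP_\cU x$, so that $P_\cU$ commutes with $H$ and consequently with the bounded operator $H^{-1}$; in particular $H^{-1}\cU\subset\cU$.

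With reducibility in hand, the two identities follow quickly. Invariance gives $\cV = H(\Dom(H)\cap\cU)\subset\cU$, while for $u\in\cU$ one has $H^{-1}u\in\Dom(H)\cap\cU$ with $H(H^{-1}u)=u$, so $u\in\cV$; hence $\cV=\cU$. For $\cW$, the same commutation yields $\Ran H^{-1}|_\cU = H^{-1}\cU = \Dom(H)\cap\cU$, which is dense in $\cU$ by part (c), so that $\cW = \overline{\Dom(H)\cap\cU} = \cU$.
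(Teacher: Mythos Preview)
Your proof is correct and follows essentially the same route as the paper: parts (a)--(c) are handled identically, and for (d) both arguments first show that $\cU^\perp$ is invariant (the paper merely calls this ``a standard argument,'' which you spell out) and then exploit this to see that $H^{-1}$ maps $\cU$ into $\Dom(H)\cap\cU$. The only organisational difference is that you package this as ``$P_\cU$ commutes with $H$ and hence with $H^{-1}$,'' whereas the paper argues directly by decomposing $H^{-1}y$ via part (b); the content is the same.
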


\begin{proof}
  (a) is clear, and (b) follows immediately from (a) and the identity $I = P_\cU + P_\cU^\perp$.

  For part (c), let $u \in \cU$. Since $H$ is densely defined, we may choose a sequence $(x_k)$ in $\Dom(H)$ that converges to
  $u$. Taking into account that $P_\cU$ is bounded, the sequence $(u_k)$ with $u_k = P_\cU x_k \in \Dom(M)$ then converges to
  $P_\cU u = u$ in $\cU$, which proves the claim.

  Finally, for part (d), suppose that $\cU$ is invariant for $H$, that is, $\cV \subset \cU$. In view of part (c), a standard
  argument then shows that also $\cU^\perp$ is invariant for $H$. Now, let $y \in \cU$. By part (b), we may decompose
  $x := H^{-1}y \in \Dom(H)$ as $x = u + v$ with $u \in \Dom(H) \cap \cU$ and $v \in \Dom(H) \cap \cU^\perp$. Then, we have
  $Hu + Hv = Hx = y \in \cU$, which by $Hu \in \cU$ and $Hv \in \cU^\perp$ implies that $Hv = 0$, so that $v = 0$ because $H$ is
  invertible. We conclude that $H^{-1}y = u \in \cU$ and $y = H u \in \cV$. Since $y \in \cU$ was arbitrary and taking into
  account that $\cU$ is closed, the former yields $\cW \subset \cU$, and the latter implies $\cU \subset \cV$, that is,
  $\cU = \cV$.

  In order to show the remaining inclusion $\cU \subset \cW$, we observe that the invariance of $\cU$ for $H$ implies that
  $\Dom(H) \cap \cU \subset \Ran H^{-1}|_\cU \subset \cW$. In view of part~(c) and the closedness of $\cW$, this shows that
  indeed $\cU \subset \cW$, which completes the proof.
\end{proof}%

\begin{remark}
  The above reasoning for part (d) of Lemma~\ref{lem:invariant} is essentially contained, at least in part, in the proof of
  Lemma~2.1 in \cite{MSS16}; cf.\ also Remark~2.3 and Lemma~2.4 in \cite{TW14}.
\end{remark}

The next lemma proves the existence of the (not necessarily orthogonal) projection onto $\cV$ along $\cW^\perp$ by providing an
explicit representation in terms of $H$ and $P_\cU$.

\begin{lemma}\label{lem:projection}
  The operator $P = HP_\cU H^{-1}$ is the projection onto $\cV$ along $\cW^\perp$, that is, $P$ is bounded with $P^2 = P$ and
  satisfies $\Ran P = \cV$ and $\Ker P = \cW^\perp$.
\end{lemma}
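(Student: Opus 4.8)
The plan is to verify the stated properties of $P = HP_\cU H^{-1}$ one at a time, treating its boundedness as the only nontrivial point. First I would record that $P$ is everywhere defined: since $H^{-1}$ is bounded with $\Ran H^{-1} = \Dom(H)$ and $\Dom(H)$ is invariant for $P_\cU$ by hypothesis, the vector $P_\cU H^{-1}x$ lies in $\Dom(H)$ for every $x \in \cH$, so $Px = HP_\cU H^{-1}x$ is meaningful on all of $\cH$.

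The main obstacle is boundedness, since the unbounded operator $H$ sits on the outside of the composition. I would deduce it from the closed graph theorem. Writing $w_x := P_\cU H^{-1}x$, the map $x \mapsto w_x$ is bounded, while $H$ is closed (being self-adjoint). Thus, if $x_n \to x$ and $Px_n = Hw_{x_n} \to y$, then $w_{x_n} \to w_x$ by boundedness of $P_\cU H^{-1}$, and closedness of $H$ forces $w_x \in \Dom(H)$ with $Hw_x = y$, i.e.\ $Px = y$. Hence $P$ is closed and everywhere defined, so bounded by the closed graph theorem.

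With boundedness in hand, idempotency is a direct computation: for $x \in \cH$ one has $P_\cU H^{-1}x \in \Dom(H) \cap \cU$, so $H^{-1}Px = H^{-1}HP_\cU H^{-1}x = P_\cU H^{-1}x$; applying $P_\cU$ (using $P_\cU^2 = P_\cU$) and then $H$ yields $P^2x = Px$. For the range, the identity $Px = H(P_\cU H^{-1}x)$ with $P_\cU H^{-1}x \in \Dom(H)\cap\cU$ shows $\Ran P \subseteq \Ran H|_\cU = \cV$; conversely, any $v = Hu$ with $u \in \Dom(H)\cap\cU$ satisfies $Pv = HP_\cU H^{-1}Hu = HP_\cU u = Hu = v$, so $P$ fixes $\cV$ and therefore $\Ran P = \cV$.

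It remains to identify the kernel. Since $H$ is injective, $Px = 0$ is equivalent to $P_\cU H^{-1}x = 0$, that is, to $H^{-1}x \in \cU^\perp$. Using that $H^{-1}$ is bounded and self-adjoint, for every $u \in \cU$ one has $\langle H^{-1}x, u\rangle = \langle x, H^{-1}u\rangle$, so $H^{-1}x \perp \cU$ if and only if $x \perp \Ran H^{-1}|_\cU$, i.e.\ $x \in (\Ran H^{-1}|_\cU)^\perp = \cW^\perp$. This gives $\Ker P = \cW^\perp$ and completes the proof.
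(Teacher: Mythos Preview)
Your proof is correct and follows essentially the same route as the paper's: everywhere-definedness plus closedness of $H$ feed into the closed graph theorem for boundedness, idempotency is immediate, and the kernel identification rests on $(P_\cU H^{-1})^* = H^{-1}P_\cU$, which you unpack as an explicit orthogonality computation. The paper's version is simply terser, asserting closedness without the sequential argument and writing $\Ker P = \Ker(P_\cU H^{-1}) = (\Ran H^{-1}P_\cU)^\perp = \cW^\perp$ in one line.
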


\begin{proof}
  Observe that $P = HP_\cU H^{-1}$ is closed and everywhere defined, hence bounded by the closed graph theorem. It is then
  obvious that also $P^2 = P$. Finally, we have the identities $\Ran P = \Ran( H P_\cU|_{\Dom(H)} ) = \cV$ as well as
  $\Ker P = \Ker (P_\cU H^{-1}) = (\Ran( H^{-1} P_\cU ))^\perp = \cW^\perp$.
\end{proof}%

\begin{remark}
  More generally, if $L$ is a closed densely defined operator with bounded inverse such that $\Dom(L)$ is invariant for $P_\cU$,
  then $LP_\cU L^{-1}$ is the projection onto $\Ran L|_\cU$ along $\Ker(P_\cU L^{-1}) = (\Ran (L^{-*}|_\cU))^\perp$.
\end{remark}

In light of the domain splitting in part (b) of Lemma~\ref{lem:invariant}, we may define the diagonal and off-diagonal parts of
$H$ with respect to $\cU \oplus \cU^\perp$ as
\begin{equation*}
  H_\diag = P_\cU H P_\cU + P_\cU^\perp H P_\cU^\perp,\quad
  H_\off = P_\cU H P_\cU^\perp + P_\cU^\perp H P_\cU
\end{equation*}
with $\Dom(H_\diag) = \Dom(H) = \Dom(H_\off)$; cf.\ also~\cite{Gru06,GN12}. In particular, we have the operator identity
\[
  H
  =
  H_\diag + H_\off
  .
\]
Clearly, the subspace $\cU$ reduces $H_\diag$ in the sense that $H_\diag$ is the direct sum of operators defined in $\cU$ and
$\cU^\perp$, respectively, and $M$ is the part of $H_\diag$ associated to $\cU$. We now aim to apply Proposition~\ref{prop:eig}
from the previous section with $A = H$ and $V = -H_\off$, so that $A + V = H_\diag$. To this end, we make the following elementary
observation.
\begin{lemma}\label{lem:reprHoffH}
    We have
    \begin{equation}\label{eq:reprHoffH}
      H_\off H^{-1}
      =
      (P_\cU - P_\cU^\perp) (P_\cU - P)
    \end{equation}
    with $P$ as in Lemma~\ref{lem:projection}.
\end{lemma}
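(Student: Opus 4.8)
The plan is to compute $H_{\off} H^{-1}$ directly from the definitions and massage it into the claimed product. Recall that $H_{\off} = P_\cU H P_\cU^\perp + P_\cU^\perp H P_\cU$, so that
\[
  H_{\off} H^{-1}
  =
  P_\cU H P_\cU^\perp H^{-1} + P_\cU^\perp H P_\cU H^{-1}
  .
\]
The term $H P_\cU H^{-1}$ appearing in the second summand is exactly $P$ by Lemma~\ref{lem:projection}, so the second summand is $P_\cU^\perp P$. The first summand requires handling $H P_\cU^\perp H^{-1}$. Here I would write $P_\cU^\perp = I - P_\cU$ and use $H (I - P_\cU) H^{-1} = I - H P_\cU H^{-1} = I - P$, giving the first summand as $P_\cU (I - P)$. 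Combining, I expect
\[
  H_{\off} H^{-1}
  =
  P_\cU (I - P) + P_\cU^\perp P
  .
\]

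The remaining task is to recognise this sum as $(P_\cU - P_\cU^\perp)(P_\cU - P)$. I would expand the target product: using $P_\cU^\perp = I - P_\cU$ and $P_\cU^2 = P_\cU$, one gets $(P_\cU - P_\cU^\perp) = 2P_\cU - I$, and then
\[
  (2P_\cU - I)(P_\cU - P)
  =
  2P_\cU^2 - 2P_\cU P - P_\cU + P
  =
  P_\cU - 2 P_\cU P + P
  .
\]
On the other hand, expanding my computed expression gives $P_\cU (I-P) + (I - P_\cU) P = P_\cU - P_\cU P + P - P_\cU P = P_\cU - 2 P_\cU P + P$, which matches. So the two expressions agree and the identity follows.

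The one point demanding care is the manipulation of unbounded operators in the first summand, namely justifying $H P_\cU^\perp H^{-1} = I - H P_\cU H^{-1}$ as a genuine operator identity rather than a formal one. This rests on the domain splitting in part~(b) of Lemma~\ref{lem:invariant}, which ensures that $P_\cU$ and $P_\cU^\perp$ both map $\Dom(H)$ into itself, so that the compositions $H P_\cU H^{-1}$ and $H P_\cU^\perp H^{-1}$ are everywhere defined and the distributive identity $H(P_\cU + P_\cU^\perp)H^{-1} = H H^{-1} = I$ is legitimate. Once this is in place, all the algebra takes place among bounded operators and is routine. I would therefore structure the proof as: first invoke Lemma~\ref{lem:invariant}(b) to legitimise the splitting, then compute $H_{\off} H^{-1} = P_\cU(I-P) + P_\cU^\perp P$ using Lemma~\ref{lem:projection}, and finally verify by a short expansion that this equals $(P_\cU - P_\cU^\perp)(P_\cU - P)$.
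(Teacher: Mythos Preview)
Your proposal is correct and follows essentially the same route as the paper: compute $H_\off H^{-1}$ from the definition, identify $H P_\cU H^{-1} = P$ and $H P_\cU^\perp H^{-1} = I - P$, obtain $P_\cU(I-P) + P_\cU^\perp P$, and factor this as $(P_\cU - P_\cU^\perp)(P_\cU - P)$. The paper's proof is simply a terser version of what you wrote, omitting the explicit domain discussion and the expanded algebraic check of the final factorisation.
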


 \begin{proof}
  We calculate
  \begin{align*}
    H_\off H^{-1}
    &=
    P_\cU H P_\cU^\perp H^{-1} + P_\cU^\perp H P_\cU H^{-1}
      =
      P_\cU(I - P) + P_\cU^\perp P\\
    &=
    (P_\cU - P_\cU^\perp) (P_\cU - P).\qedhere
  \end{align*}
 \end{proof}%

Note that the factor $P_\cU - P_\cU^\perp$ on the right-hand side of~\eqref{eq:reprHoffH} is self-adjoint and unitary and can
therefore be ignored when it comes to estimating $H_\off H^{-1}$ in norm. With this in mind, we are now able to formulate and
prove the core result of this note. Here, the particular case where $\cU$ has finite dimension agrees with
Theorem~\ref{thm:genFinite}.

\begin{theorem}\label{thm:gen}
  Suppose that $\eta := \norm{ P_\cU - P } < 1$ with $P = HP_\cU H^{-1}$ as in Lemma~\ref{lem:projection}.
  \begin{enumerate}
    \renewcommand{\theenumi}{\alph{enumi}}

    \item
    The operator $M$ in \eqref{eq:defM} is self-adjoint and has a bounded inverse.

    \item
    With $d := \inf(\spec_\ess(H) \cap (0,\infty)) \in (0,\infty]$ we have 
    \begin{equation*}
      \inf\bigl( \spec_\ess(M) \cap (0,\infty) \bigr)
      \geq
      (1-\eta)d
      .
    \end{equation*}

    \item
    Denote by $(\mu_k)_{k \in J}$ with $J \subset \NN$ the (finite or infinite) collection of eigenvalues of $M$ in the
    interval $(0,(1-\eta)d)$, in increasing order and counting multiplicities. Then, there is a family of indices $j_k \in \NN$,
    $k \in J$, strictly increasing in $k$, such that for each $k \in J$ we have
    \begin{equation*}
      \frac{\abs{\lambda_{j_k} - \mu_k}}{\lambda_{j_k}}
      \leq
      \eta
      .
    \end{equation*}

  \end{enumerate}
\end{theorem}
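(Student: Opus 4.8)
The plan is to apply Proposition~\ref{prop:eig} with $A = H$ and $V = -H_\off$, so that $B = A + V = H_\diag$. The bridge to that proposition is Lemma~\ref{lem:reprHoffH}: since $P_\cU - P_\cU^\perp$ is unitary, \eqref{eq:reprHoffH} yields $\norm{H_\off H^{-1}} = \norm{P_\cU - P} = \eta$, and hence $\norm{H_\off x} \le \eta\norm{Hx}$ for all $x \in \Dom(H)$. Thus the hypotheses of Proposition~\ref{prop:eig} hold with $a = 0$ and $b = \eta$, because $A_1 = \eta\abs{H}$ is nonnegative and $\norm{Vx} = \norm{H_\off x} \le \eta\norm{Hx} = \norm{A_1 x}$. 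Throughout I will use that $\cU$ reduces $H_\diag$ (from the domain splitting in Lemma~\ref{lem:invariant}(b) together with the form of $H_\diag$) and that $M$ is precisely the part of $H_\diag$ acting in $\cU$.

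For part (a), self-adjointness of $H_\diag = H - H_\off$ on $\Dom(H)$ is the Kato--Rellich conclusion already recorded before Proposition~\ref{prop:eig} (relative bound $\eta < 1$); restricting to the reducing subspace $\cU$ makes $M$ self-adjoint. For the bounded inverse I would write $H_\diag = (I - H_\off H^{-1})H$ on $\Dom(H)$ and note that $I - H_\off H^{-1}$ is boundedly invertible by a Neumann series, as $\norm{H_\off H^{-1}} = \eta < 1$, while $H$ has bounded inverse; hence $0 \in \rho(H_\diag)$, and since $H_\diag^{-1}$ then leaves $\cU$ invariant with $H_\diag^{-1}|_\cU = M^{-1}$, also $M$ has a bounded inverse. (Alternatively, part~(a) of Proposition~\ref{prop:eig} applied to a small interval $(\alpha,\beta)$ around $0$ gives $0 \in (f_+(\alpha), f_-(\beta)) \subset \rho(H_\diag)$.)

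For parts (b) and (c) I would invoke Proposition~\ref{prop:eig} with $\alpha = 0$, which is admissible since $0 \in \rho(H)$ supplies an interval $(0,\beta) \subset \rho(H)$ and the gap condition reads $\beta > \eta\beta$, true as $\eta < 1$. Here $f_+(0) = 0$, so the conclusions concern the positive spectral subspaces and $\lambda_{0,j}(H) = \lambda_j$. Part~(d) gives $\inf(\spec_\ess(H_\diag) \cap (0,\infty)) \ge (1-\eta)d$; combined with $\spec_\ess(M) \subset \spec_\ess(H_\diag)$ (valid because $H_\diag$ is the orthogonal direct sum of $M$ and its part $M'$ in $\cU^\perp$), this yields part~(b). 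For part~(c), each $\mu_k$ lies in $(0,(1-\eta)d)$, hence below $\inf(\spec_\ess(H_\diag) \cap (0,\infty))$, so it is a genuine variational value of $H_\diag$; writing the positive variational values of $H_\diag = M \oplus M'$ below the essential spectrum as the sorted merge of those of $M$ and $M'$ produces a strictly increasing index map $k \mapsto j_k$ with $\lambda_{0,j_k}(H_\diag) = \mu_k$. Part~(b) equates the dimensions of the positive spectral subspaces of $H$ and $H_\diag$, so these indices are admissible in part~(c) of Proposition~\ref{prop:eig}, which gives $\abs{\lambda_{j_k} - \mu_k} \le \eta\lambda_{j_k}$; dividing by $\lambda_{j_k}$ is the desired bound, and $\mu_k < (1-\eta)d$ moreover forces $\lambda_{j_k} < d$, so $\lambda_{j_k}$ is a genuine eigenvalue as noted in Remark~\ref{rem:genFinite}(2).

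The main obstacle I anticipate is the bookkeeping in part~(c): one must carefully justify that the eigenvalues $\mu_k$ of $M$ below $(1-\eta)d$ inject order-preservingly (with multiplicity) into the positive variational values of $H_\diag$, and that the resulting indices stay within the common dimension furnished by part~(b), so that Proposition~\ref{prop:eig}(c) can be applied at exactly those indices. The spectral-calculus inputs, namely $\spec_\ess(M) \subset \spec_\ess(H_\diag)$ and the fact that the variational values of a direct sum are the sorted merge of the summands' variational values below the essential spectrum, are routine but should be stated cleanly in order to make the index matching rigorous.
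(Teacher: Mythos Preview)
Your proposal is correct and follows essentially the same approach as the paper: apply Proposition~\ref{prop:eig} with $A=H$, $V=-H_\off$, $a=0$, $b=\eta$, using Lemma~\ref{lem:reprHoffH} to verify the relative bound, and then read off (a)--(c) from the conclusions of that proposition together with the fact that $M$ is the part of $H_\diag$ in the reducing subspace $\cU$. The only cosmetic difference is that the paper works with a single interval $(\alpha,\beta)\subset\rho(H)$ with $\alpha<0<\beta$, which delivers $0\in\rho(H_\diag)$ and the eigenvalue comparison in one stroke, whereas you split this into a Neumann-series argument for (a) and an application of Proposition~\ref{prop:eig} with $\alpha=0$ for (b) and (c); both choices are valid and lead to the same conclusions.
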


\begin{proof}
  In view of Lemma~\ref{lem:reprHoffH}, we have $\norm{H_\off H^{-1}} = \norm{P_\cU-P} = \eta < 1$. In
  particular, this gives
  \begin{equation*}
    \norm{H_\off x}
    \leq
    \eta\norm{Hx}
    =
    \eta\norm{\abs{H}x}
  \end{equation*}
  for all $x \in \Dom(H)$. In the notation of Section~\ref{sec:relBounds}, we may therefore take $A = H$ and $V = -H_\off$ with
  $a = 0$ and $b = \eta \in [0,1)$. Moreover, since $H$ has a bounded inverse, there are numbers $\alpha,\beta \in \RR$ with
  $\alpha < 0 < \beta$ such that the interval $(\alpha,\beta)$ belongs to the resolvent set of $H$; in particular, we have
  \[
    d
    =
    \inf(\spec_\ess(H) \cap (0,\infty))
    =
    \inf(\spec_\ess(H) \cap (\alpha,\infty))
    \geq
    \beta
    .
  \]
  We observe that $2a + b(\abs{\alpha}+\abs{\beta}) = \eta(\beta-\alpha) < \beta - \alpha$, so that the hypotheses of
  Proposition~\ref{prop:eig} are satisfied. We conclude that $H_\diag = H - H_\off$ is self-adjoint and that the interval
  $((1-\eta)\alpha,(1-\eta)\beta)$ belongs to its resolvent set; in particular, $H_\diag$ has a bounded inverse. Moreover,
  part~(d) of Proposition~\ref{prop:eig} gives
  \[
    \inf(\spec_\ess(H_\diag) \cap ((1-\eta)\alpha,\infty))
    \geq
    (1-\eta)d
    .
  \]
  Since $\cU$ reduces $H_\diag$ and $M$ is the part of $H_\diag$ associated to $\cU$, this proves (a) and (b).

  Taking into account that each $\lambda_{\alpha,j}(H)$ is positive, it follows from part~(c) of Proposition~\ref{prop:eig} that
  \begin{equation}\label{eq:eigHHdiag}
    \frac{\abs{\lambda_{\alpha,j}(H) - \lambda_{(1-\eta)\alpha,j}(H_\diag)}}{\lambda_{\alpha,j}(H)}
    \leq
    \eta
  \end{equation}
  for all $j \in \NN$ with $j \leq \dim\Ran\EE_H((\alpha,\infty))$. In particular, this implies that $\lambda_{\alpha,j}(H) < d$
  if $\lambda_{(1-\eta)\alpha,j}(H_\diag) < (1-\eta)d$. Now, by definition of the $\mu_k$ there are indices $j_k$ with
  $\lambda_{(1-\eta)\alpha,j_k}(H_\diag) = \mu_k \in (0,(1-\eta)d)$ for all $k \in J$. Thus, $\lambda_{\alpha,j_k}(H) < d$ is the
  $j_k$-th positive eigenvalue of $H$ below $d$, that is, $\lambda_{\alpha,j_k}(H) = \lambda_{j_k}$. Together with
  \eqref{eq:eigHHdiag}, this shows part~(c) and, hence, completes the proof of the theorem.
\end{proof}%

Let us collect some useful observations regarding part~(b) of Theorem~\ref{thm:gen}.
\begin{remark}\label{rem:main}
  (1)
  The proof of Theorem~\ref{thm:gen} gives
  \begin{equation*}
    (1-\eta)d
    \leq
    \inf(\spec_\ess(H_\diag) \cap ((1-\eta)\alpha,\infty))
    \leq
    \inf(\spec_\ess(M) \cap ((1-\eta)\alpha,\infty))
    ,
  \end{equation*}
  and either inequality may a priori be strict. Thus, eigenvalues of $M$ that are larger than (or equal to) $(1-\eta)d$ are not
  necessarily accessible via the variational values $\lambda_{(1-\eta)\alpha,j}(H_\diag)$, and even for those that are accessible,
  we can no longer guarantee that the corresponding variational values $\lambda_{\alpha,j}(H)$ for $H$ are smaller than $d$. The
  latter may therefore not correspond to eigenvalues of $H$.

  (2)
  If $\cU$ has finite dimension, then
  \[
    \inf(\spec_\ess(H_\diag) \cap ((1-\eta)\alpha,\infty))
    =
    \inf(\spec_\ess(H) \cap (\alpha,\infty))
    =
    d
    .
  \]
  Indeed, in this case $\Ran(P_\cU-P)$ has finite dimension and, consequently, in view of Lemma~\ref{lem:reprHoffH},
  $H_\diag^{-1} - H^{-1} = H_\diag^{-1}H_\off H^{-1}$ is compact. Hence, $\spec_\ess(H_\diag) = \spec_\ess(H)$, see, e.g.,
  \cite[Theorem~IV.5.35]{Kato95}.
  
  (3)
  Although $\inf(\spec_\ess(H_\diag) \cap ((1-\eta)\alpha,\infty)) \leq (1+\eta)d$ by part~(d) of Proposition~\ref{prop:eig}, the
  term $\inf(\spec_\ess(M) \cap ((1-\eta)\alpha,\infty))$ might a priori be a lot larger, for instance if $H_\diag$ has positive
  essential spectrum but $M$ does not. In view of part~(2) of this remark, this is the case, in particular, if $H$ has positive
  essential spectrum and $\cU$ has finite dimension.
\end{remark}

The following remark addresses an extension of Theorem~\ref{thm:gen} to gaps of the essential spectrum of $H$ that do not contain
zero.

\begin{remark}\label{rem:gap}
  The general form of Proposition~\ref{prop:eig} allows to obtain also similar statements as in Theorem~\ref{thm:gen} for
  eigenvalues in gaps of the essential spectrum not containing zero. More precisely, instead of the interval $(\alpha,\beta)$ in
  the proof of Theorem~\ref{thm:gen}, we may consider any interval $(\tilde{\alpha},\tilde{\beta})$ belonging to the resolvent
  set of $H$ such that $\eta = \norm{P_\cU - P}$ satisfies the (stronger) condition
  \begin{equation*}
    \eta
    <
    \frac{\tilde{\beta}-\tilde{\alpha}}{\abs{\tilde{\alpha}}+\abs{\tilde{\beta}}}
    .
  \end{equation*}
  The terms $(1-\eta)\alpha$, $(1-\eta)\beta$, and $(1-\eta)d$ in the proof then just have to be replaced by
  $\tilde{\alpha} + \eta\abs{\tilde{\alpha}}$, $\tilde{\beta} - \eta\abs{\tilde{\beta}}$, and $\tilde{d} - \eta\abs{\tilde{d}}$,
  respectively, where $\tilde{d}$ is given by $\tilde{d} = \inf(\spec_\ess(H) \cap (\tilde{\alpha},\infty)) \geq \tilde{\beta}$.
\end{remark}

Theorem~\ref{thm:gen} relies on the crucial condition $\norm{ P_\cU - P } < 1$, so let us now address how the norm of $P_\cU - P$
can be estimated. To this end, we may choose one of the alternative decompositions
\begin{equation}\label{eq:projDecomp}
  P_\cU - P
  =
  ( P_\cU - P_\cV ) + ( P_\cV - P )
  =
  ( P_\cU - P_\cW ) + ( P_\cW - P)
  .
\end{equation}
Here, the terms $P_\cU - P_\cV$ and $P_\cU - P_\cW$ correspond to sines of the operator angles associated to the pairs of
subspaces $(\cU,\cV)$ and $(\cU,\cW)$, respectively. More precisely,
\begin{equation}\label{eq:opAngle}
  \abs{P_\cU - P_\cV} = \sin\Theta(\cU,\cV)
  \quad\text{ and }\quad
  \abs{P_\cU - P_\cW} = \sin\Theta(\cU,\cW),
\end{equation}
where $\Theta(\cdot,\cdot)$ denotes the operator angle associated with the respective subspaces, see,
e.g.,~\cite[Section~2]{Seel14} and the references cited therein for a discussion. In particular, the maximal angle introduced in
Section~\ref{sec:intro} satisfies $\theta(\cdot,\cdot) = \norm{\Theta(\cdot,\cdot)}$.

In order to address the other two terms, $P_\cV - P$ and $P_\cW - P$, we make the following considerations:

Since $\Ran(I_\cH - P) = \Ker P = \cW^\perp$, we obtain from $P + (I-P) = I$ that $P_\cW P = P_\cW$. Hence, the projection $P$
can be represented with respect to the orthogonal decomposition $\cW \oplus \cW^\perp$ as the $2\times 2$ block operator matrix
\begin{equation}\label{eq:P}
  P
  =
  \begin{pmatrix}
    I_\cW & 0\\
    X & 0
  \end{pmatrix}
\end{equation}
with $X := P_\cW^\perp P|_\cW$, interpreted as an operator from $\cW$ to $\cW^\perp$. In particular, $\cV = \Ran P$ admits the
graph subspace representation
\begin{equation}\label{eq:V}
  \cV = \{ f \oplus Xf \colon f \in \cW \}.
\end{equation}
Recall from~\cite[Corollary~3.4 and Remark~3.6]{KMM03:181} that consequently we have $\norm{ P_\cW - P_\cV} < 1$ and that $X$
corresponds to the tangent of the operator angle associated to the subspaces $\cW$ and $\cV$, more precisely
\begin{equation}\label{eq:reprX}
  \begin{pmatrix} \abs{X} & 0\\ 0 & \abs{X^*} \end{pmatrix}
  =
  \tan\Theta(\cW,\cV)
  .
\end{equation}
Moreover, we have
\begin{equation}\label{eq:PV}
  P_\cV = U P_\cW U^*,
\end{equation}
where $U$ is the unitary operator given by the $2\times 2$ block operator matrix
\begin{equation}\label{eq:U}
  U
  =
  \begin{pmatrix}
    (I_\cW + X^*X)^{-1/2} & -X^*(I_{\cW^\perp} + XX^*)^{-1/2}\\
    X(I_\cW + X^*X)^{-1/2} & (I_{\cW^\perp} + XX^*)^{-1/2}
  \end{pmatrix}
  .
\end{equation}
A broader discussion on the operator angle and graph subspace representations can be found, for instance, in
\cite[Sections~1.3 and~1.5]{SeelDiss} and the references cited therein.

\begin{remark}
  The inequality $\norm{ P_\cW - P_\cV } < 1$ can alternatively also be verified as follows: Since the projection $P$ onto $\cV$
  along $\cW^\perp$ exists by Lemma~\ref{lem:projection}, Proposition~1.6 in \cite{BS10} yields that
  $\norm{ P_\cV P_\cW^\perp } < 1$. Taking into account that $P^*$ is the projection onto $\cW$ along $\cV^\perp$, we obtain in
  the same way that $\norm{ P_\cV^\perp P_\cW } = \norm{ P_\cW P_\cV^\perp } < 1$. Using
  $\norm{ P_\cW - P_\cV } = \max\{ \norm{P_\cV P_\cW^\perp} , \norm{P_\cV^\perp P_\cW} \}$, see, e.g., \cite[Section~34]{AG93},
  this gives $\norm{ P_\cW - P_\cV } < 1$.
\end{remark}

\begin{lemma}\label{lem:annular}
  With $X = P_\cW^\perp P|_\cW \colon \cW \to \cW^\perp$ and $U$ as in \eqref{eq:U} we have
  \begin{equation*}
    P_\cW - P = \begin{pmatrix} 0 & 0\\ -X & 0 \end{pmatrix}
  \end{equation*}
  and
  \begin{equation*}
    P_\cV - P = U \begin{pmatrix} 0 & X^*\\ 0 & 0 \end{pmatrix} U^*.
  \end{equation*}
\end{lemma}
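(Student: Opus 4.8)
The first identity is immediate. With respect to the orthogonal decomposition $\cW \oplus \cW^\perp$, the orthogonal projection $P_\cW$ is represented by the block operator matrix $\bigl(\begin{smallmatrix} I_\cW & 0\\ 0 & 0\end{smallmatrix}\bigr)$, so subtracting the representation \eqref{eq:P} of $P$ at once yields
\[
  P_\cW - P
  =
  \begin{pmatrix} I_\cW & 0\\ 0 & 0 \end{pmatrix}
  -
  \begin{pmatrix} I_\cW & 0\\ X & 0 \end{pmatrix}
  =
  \begin{pmatrix} 0 & 0\\ -X & 0 \end{pmatrix}
  .
\]

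For the second identity the plan is to exploit the conjugation formula \eqref{eq:PV}, namely $P_\cV = U P_\cW U^*$. Conjugating the asserted identity by $U$, it becomes equivalent to
\[
  U^* P U
  =
  P_\cW - \begin{pmatrix} 0 & X^*\\ 0 & 0 \end{pmatrix}
  =
  \begin{pmatrix} I_\cW & -X^*\\ 0 & 0 \end{pmatrix}
  ,
\]
so that the whole lemma reduces to verifying this single block identity. I would establish it by a direct block multiplication using the representation \eqref{eq:P} of $P$ together with the explicit form \eqref{eq:U} of $U$. Abbreviating $C := (I_\cW + X^*X)^{-1/2}$ and $S := (I_{\cW^\perp} + XX^*)^{-1/2}$, one first computes $PU$ and then $U^*(PU)$ entrywise.

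Three of the four resulting blocks are straightforward. The diagonal $(1,1)$-entry collapses to $I_\cW$ using only that $C$ is a function of $X^*X$, hence commutes with it and satisfies $C^2(I_\cW + X^*X) = I_\cW$; the lower-left and lower-right entries vanish by the obvious cancellations of $\pm SXC$ and $\pm SXX^*S$, respectively. The one genuinely non-trivial block---and the step I expect to be the main obstacle---is the upper-right entry, which after multiplication reads $-C X^*(I_{\cW^\perp} + XX^*)S$. To reduce it to $-X^*$ I would use that $(I_{\cW^\perp}+XX^*)S = (I_{\cW^\perp}+XX^*)^{1/2}$ together with the intertwining relation $X^*\,g(XX^*) = g(X^*X)\,X^*$, valid for any Borel function $g$ by the standard polar-decomposition argument; applied to $g(t) = (1+t)^{1/2}$ this gives $X^*(I_{\cW^\perp}+XX^*)^{1/2} = (I_\cW + X^*X)^{1/2}X^* = C^{-1}X^*$, whence $-CX^*(I_{\cW^\perp}+XX^*)S = -C\,C^{-1}X^* = -X^*$. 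This completes the identity $U^*PU = \bigl(\begin{smallmatrix} I_\cW & -X^*\\ 0 & 0\end{smallmatrix}\bigr)$ and hence, after conjugating back by $U$, proves the second assertion.
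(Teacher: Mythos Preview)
Your proof is correct and follows essentially the same approach as the paper: both arguments obtain the first identity directly from \eqref{eq:P} and verify the second by block $2\times 2$ multiplication using \eqref{eq:P}, \eqref{eq:PV}, \eqref{eq:U} together with the intertwining relation between $X^*$ and functions of $XX^*$ versus $X^*X$. The only cosmetic differences are that you conjugate by $U^*$ first to reduce to checking $U^*PU = \bigl(\begin{smallmatrix} I_\cW & -X^*\\ 0 & 0\end{smallmatrix}\bigr)$, and you invoke the intertwining with $g(t)=(1+t)^{1/2}$ whereas the paper states it with $g(t)=(1+t)^{-1/2}$.
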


\begin{proof}
  The representation for $P_\cW - P$ follows directly from~\eqref{eq:P}. Moreover,
  using the identity $X^*(I_{\cW^\perp} + XX^*)^{-1/2} = (I_\cW + X^*X)^{-1/2}X^*$, the representation for $P_\cV - P$ is verified
  from~\eqref{eq:P},~\eqref{eq:PV}, and~\eqref{eq:U} by plain multiplication of $2\times 2$ block operator matrices.
\end{proof}%

We now arrive at the following result, the particular case of which where $\cU$ has finite dimension agrees with
Theorem~\ref{thm:angleFinite}.

\begin{theorem}\label{thm:angle}
  We have
  \begin{equation*}
    \norm{P_\cU - P}
    \leq
    \min\bigl\{ \sin\theta(\cU,\cV) , \sin\theta(\cU,\cW) \bigr\} + \tan\theta(\cV,\cW)
    .
  \end{equation*}
\end{theorem}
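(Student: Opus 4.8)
The plan is to bound $\norm{P_\cU - P}$ by the triangle inequality applied to the two decompositions in \eqref{eq:projDecomp}, treating the ``angle'' terms $P_\cU - P_\cV$ and $P_\cU - P_\cW$ via the operator-angle identities \eqref{eq:opAngle}, and the ``graph'' terms $P_\cV - P$ and $P_\cW - P$ via the explicit block representations from Lemma~\ref{lem:annular}. The two decompositions yield two upper bounds that differ only in whether $\cV$ or $\cW$ appears in the first term, and taking their minimum will give the claimed estimate.

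First I would start from the second decomposition in \eqref{eq:projDecomp}, so that $\norm{P_\cU - P} \le \norm{P_\cU - P_\cW} + \norm{P_\cW - P}$. For the first summand, \eqref{eq:opAngle} gives $\norm{P_\cU - P_\cW} = \norm{\abs{P_\cU - P_\cW}} = \norm{\sin\Theta(\cU,\cW)}$, and since $\theta(\cdot,\cdot) = \norm{\Theta(\cdot,\cdot)}$ and $\sin$ is increasing on $[0,\pi/2]$, the spectral theorem gives $\norm{\sin\Theta(\cU,\cW)} = \sin\theta(\cU,\cW)$. For the second summand, Lemma~\ref{lem:annular} expresses $P_\cW - P$ as the block matrix with single nonzero entry $-X$, so that $\norm{P_\cW - P} = \norm{X}$. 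Running the same argument from the first decomposition instead yields $\norm{P_\cU - P} \le \sin\theta(\cU,\cV) + \norm{P_\cV - P}$, and here the unitarity of $U$ in \eqref{eq:U} together with the representation of $P_\cV - P$ in Lemma~\ref{lem:annular} gives $\norm{P_\cV - P} = \norm{X^*} = \norm{X}$.

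Next I would identify $\norm{X}$ with $\tan\theta(\cV,\cW)$. By \eqref{eq:reprX} the operator $\tan\Theta(\cW,\cV)$ is block-diagonal with diagonal entries $\abs{X}$ and $\abs{X^*}$, hence its norm equals $\max\{\norm{\abs{X}},\norm{\abs{X^*}}\} = \norm{X}$; since $\theta(\cW,\cV) < \pi/2$ (because $\norm{P_\cW - P_\cV} < 1$) and $\tan$ is increasing on the relevant range, the spectral theorem again gives $\norm{\tan\Theta(\cW,\cV)} = \tan\theta(\cW,\cV)$. Combining this with the two bounds above and taking the minimum over the choice of decomposition yields exactly $\norm{P_\cU - P} \le \min\{\sin\theta(\cU,\cV),\sin\theta(\cU,\cW)\} + \tan\theta(\cV,\cW)$.

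The proof is essentially bookkeeping of norm identities already assembled in the preceding lemmas, so I do not expect a serious obstacle. The only points requiring a little care are the functional-calculus passages $\norm{\sin\Theta} = \sin\norm{\Theta}$ and $\norm{\tan\Theta} = \tan\norm{\Theta}$, which rely on $\sin$ and $\tan$ being increasing on the relevant spectral ranges (so that the norm is attained at the top of the spectrum of the self-adjoint operator $\Theta$), together with the observation that the nilpotent off-diagonal blocks in Lemma~\ref{lem:annular} have norm exactly $\norm{X}$, left unchanged under conjugation by the unitary $U$.
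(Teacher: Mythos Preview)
Your proof is correct and follows essentially the same route as the paper: apply the triangle inequality to each decomposition in \eqref{eq:projDecomp}, identify the first summands as $\sin\theta(\cU,\cV)$ and $\sin\theta(\cU,\cW)$ via \eqref{eq:opAngle}, use Lemma~\ref{lem:annular} together with \eqref{eq:reprX} to see that $\norm{P_\cV - P} = \norm{P_\cW - P} = \norm{X} = \tan\theta(\cV,\cW)$, and then take the minimum. The only difference is that you spell out in more detail the functional-calculus justifications $\norm{\sin\Theta} = \sin\norm{\Theta}$ and $\norm{\tan\Theta} = \tan\norm{\Theta}$, which the paper takes for granted.
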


\begin{proof}
  From Lemma~\ref{lem:annular} and \eqref{eq:reprX} we obtain that
  \[
    \norm{ P_\cW - P }
    =
    \norm{ P_\cV - P }
    =
    \norm{ X }
    =
    \tan \theta(\cV , \cW)
    ,
  \]
  where for the last equality we used that $\norm{ \tan\Theta(\cW , \cV) } = \tan \theta(\cV , \cW)$.
  Combining the latter with \eqref{eq:projDecomp} and \eqref{eq:opAngle} gives
  \begin{align*}
    \norm{P_\cU - P}
    &\leq
    \min\bigl\{ \norm{P_\cU-P_\cV} , \norm{P_\cU-P_\cW} \bigr\} + \norm{X}\\
    &=
    \min\bigl\{ \sin\theta(\cU,\cV) , \sin\theta(\cU,\cW) \bigr\} + \tan\theta(\cV,\cW)
    ,
  \end{align*}
  which proves the claim.
\end{proof}%

\section*{Acknowledgements}
The author is grateful to Zlatko Drma\v{c} and Ivan Veseli\'c for suggesting this research direction. He also thanks Kre\v{s}imir
Veseli\'c for a helpful communication.


\end{document}